\documentclass[preprint, 11pt]{article}

\pdfoutput=1

\usepackage{amsmath,amsthm,amsfonts,amssymb,amscd}
\usepackage{graphicx,bm,color}
\usepackage{algorithm}
\usepackage{subcaption}
\usepackage[T1]{fontenc}
\usepackage{algpseudocode}
\usepackage{stmaryrd}
\usepackage{listings}
\usepackage{lineno}
\usepackage{mathptmx}
\usepackage{cleveref}
\usepackage[skins,theorems]{tcolorbox}
\usepackage{authblk}
\usepackage[latin1]{inputenc}
\usetikzlibrary{arrows,calc, decorations.markings, intersections}
\usepackage{caption}
\usepackage{cancel}
\usepackage{mathtools}
\usepackage{url}
 
\textheight 8.5in
\textwidth 6in
\oddsidemargin .25in
\topmargin -1cm


\definecolor{gray}{gray}{0.6}

\theoremstyle{plain}

\newtheorem{theorem}{Theorem}

\newtheorem{lemma}[theorem]{Lemma}

\theoremstyle{remark}

\newtheorem*{remark*}{Remark}

\newcommand{\vertiii}[1]{{\left\vert\kern-0.25ex\left\vert\kern-0.25ex\left\vert #1 
		\right\vert\kern-0.25ex\right\vert\kern-0.25ex\right\vert}}

\newcommand{\Mod}[1]{\ (\mathrm{mod}\ #1)}

\begin{document}

\title{A proof of An{\dj}eli\'c-Fonseca conjectures on the determinant of some Toeplitz matrices and their generalization }

\author[1]{Bakytzhan Kurmanbek}
\author[1, *]{Yerlan Amanbek}
\author[2]{Yogi Erlangga}
\affil[1]{Nazarbayev University, Department of Mathematics, 53 Kabanbay Batyr Ave, Nur-Sultan 010000, Kazakhstan}
\affil[2]{Zayed University, Department of Mathematics, Abu Dhabi Campus, P.O. Box 144534, United Arab Emirates}
\affil[ ]{\textit{ bakytzhan.kurmanbek@nu.edu.kz, yerlan.amanbek@nu.edu.kz, yogi.erlangga@zu.ac.ae}}
\affil[*]{Corresponding author}

\date{\today}
\maketitle

\begin{abstract}

We present a proof of determinant of special nonsymmetric Toeplitz matrices conjectured by An{\dj}eli\'c and Fonseca in \cite{andjelic2020some}. A proof is also demonstrated for a more general theorem. The two conjectures are therefore just two possible results, under two specific settings. Numerical examples validating the theorem are provided. 

\end{abstract}

\begin{keyword}
 determinant; Toeplitz matrices; 
\end{keyword}



\newcommand{\bs}[1]{\boldsymbol{#1}}

\section{Introduction}

In this paper, we focus on the determinant of $n \times n$ Toeplitz matrices of the form
\begin{equation}
\begin{pmatrix}
1 & 1 & 0 & 0 &  \cdots & a & b\\
1 & 1 & 1 & 0 & \cdots & 0 & a \\
1 & 1 & 1 & 1 &  \cdots & 0 & 0 \\ 
0 & 1 & 1 & 1 & \cdots & 0 & 0 \\
\vdots &   & \ddots & \ddots &  \ddots & \ddots &  \vdots \\
0 & \cdots & \cdots & 1 & 1 & 1 & 1 \\
0 & \cdots & \cdots & 0 & 1 & 1 & 1 \\
\end{pmatrix}, \label{genmatrix}
\end{equation}
where $a, b \in \mathbb{R}$. An{\dj}eli\'c and Fonseca consider two cases: (i)  $a=0$ and $b=1$; and (ii) $a=1$ and $b=0$, and give an explicit formula of the determinant stated as Conjecture~6.1 and~6.2 in \cite{andjelic2020some}. We shall present a proof of the two conjectures (in Section 2) and generalize the statement for any $a,b$ (Section 3). Numerical examples of determinant of $n \times n$ matrices would be shown (Section 4). The proof is mostly based on a tedious use of elementary techniques.

To make this paper self-contained, we state the two conjectures of  \cite{andjelic2020some} below, now as theorem. Throughout the paper, the notation $| \cdot |_n$ denotes the determinant of an $n \times n$ matrix in the argument; i.e., $| \cdot |_n = \det(\cdot)$, where ``$\cdot$'' is an $n \times n$ matrix.

\begin{theorem}[Conjecture 6.1 of \cite{andjelic2020some}] \label{thm:conj1}
	Let $A_n$ be the determinant of $n \times n$ matrix given by
	\begin{equation*}
	A_{n} = 
	\begin{vmatrix}
	1 & 1 & 0 & 0 & 0 & \cdots & 1 \\
	1 & 1 & 1 & 0 & 0 & \cdots & 0 \\
	1 & 1 & 1 & 1 & 0 & \cdots & 0 \\ 
	0 & 1 & 1 & 1 & 1 & \cdots & 0 \\
	\vdots &   & \ddots & \ddots &  \ddots & \ddots &  \vdots \\
	0 & \cdots & \cdots & 1 & 1 & 1 & 1 \\
	0 & \cdots & \cdots & 0 & 1 & 1 & 1 \\
	\end{vmatrix}_n.
	\end{equation*}
	\newline
	Then 
	
	\begin{equation*}
	A_n=
	\begin{cases}
	1, & \text{if } n \equiv 0 \Mod{4}, \\  
	2, & \text{if } n \equiv 1 \Mod{4}, \\
	-1,& \text{if } n \equiv 2 \Mod{4}, \\
	0,  &\text{if } n \equiv 3 \Mod{4} .
	\end{cases}
	\end{equation*}	
\end{theorem}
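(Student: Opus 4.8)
The plan is to peel off the single ``corner'' entry in position $(1,n)$ and reduce everything to the determinant of the purely banded Toeplitz matrix obtained by replacing that entry with $0$. Denote by $B_n$ the $n\times n$ Toeplitz matrix whose only nonzero entries are $1$'s on the main diagonal, the first superdiagonal, and the first and second subdiagonals, and set $D_n=\det B_n$. Expanding the determinant in the statement by multilinearity in its first row $(1,1,0,\dots,0,1)=(1,1,0,\dots,0)+(0,\dots,0,1)$ gives
\begin{equation*}
A_n=D_n+(-1)^{\,n+1}\,C_{n-1},
\end{equation*}
where $C_{n-1}$ is the determinant of the minor obtained by deleting the first row and the last column. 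A direct inspection shows that this minor is the $(n-1)\times(n-1)$ Toeplitz matrix carrying $1$'s on the main diagonal, the first subdiagonal, and the first and second superdiagonals; its transpose is exactly $B_{n-1}$, so $C_{n-1}=D_{n-1}$ and therefore $A_n=D_n+(-1)^{n+1}D_{n-1}$.

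The heart of the proof is a linear recurrence for $D_n$. Expanding $\det B_n$ along its last row $(0,\dots,0,1,1,1)$ produces three contributions, from columns $n$, $n-1$ and $n-2$. The first is $B_{n-1}$ outright; in the other two the columns inherited from original columns $n$ and $n-1$ each contain a single $1$, so one or two further Laplace expansions along these single-entry columns collapse the minors to $B_{n-2}$ and $B_{n-3}$ respectively. Keeping track of the signs, this yields
\begin{equation*}
D_n=D_{n-1}-D_{n-2}+D_{n-3},\qquad D_1=1,\quad D_2=0,\quad D_3=0
\end{equation*}
(with the empty-matrix convention $D_0=1$). Because the characteristic polynomial $x^3-x^2+x-1=(x-1)(x^2+1)$ divides $x^4-1$, every solution of this recurrence has period $4$; combined with $D_4=D_3-D_2+D_1=1$ this gives $D_n=1$ for $n\equiv0,1\Mod4$ and $D_n=0$ for $n\equiv2,3\Mod4$.

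Substituting these values into $A_n=D_n+(-1)^{n+1}D_{n-1}$ and running through the four residue classes of $n$ modulo $4$ reproduces the claimed values $1,2,-1,0$; the small cases $n\le 4$, where the displayed band pattern degenerates, are checked by direct computation. The step I expect to be the genuine work — and the one the authors themselves describe as ``tedious'' — is the iterated Laplace expansion near the bottom-right corner that establishes the recurrence for $D_n$: one has to enumerate the minors that appear, verify that each reduces to some $B_{n-k}$ after further expansions along columns with a unique nonzero entry, and carry the alternating signs correctly through every step. Once the recurrence is secured, the remainder is routine bookkeeping, in keeping with the elementary argument announced in the introduction.
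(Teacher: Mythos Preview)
Your proposal is correct and follows essentially the same route as the paper: your $D_n$ is the paper's $C_n$, you obtain the identical formula $A_n=C_n+(-1)^{n+1}C_{n-1}$ and the same third-order recurrence $C_n=C_{n-1}-C_{n-2}+C_{n-3}$ (hence period $4$), and then read off the four values from the initial data. The only cosmetic differences are that you use multilinearity in the first row to isolate the corner entry in one stroke (the paper does a full cofactor expansion of the first row and then simplifies via the recurrence), and you phrase the periodicity via the characteristic polynomial rather than by substituting the recurrence into itself.
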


\begin{theorem}[Conjecture 6.2 of \cite{andjelic2020some}]  \label{thm:conj2}
	Let $M_n$ be the determinant of $n \times n$ matrix given by 
	\begin{equation*} \label{matM}
	M_{n} = 
	\begin{vmatrix}
	1 & 1 & 0 &   0 & 0  & 1  & 0\\
	1 & 1 & 1 &   0 & \ddots &  \ddots & 1  \\
	1 & 1 & 1 &    1 & \ddots  & 0 & 0 \\ 
	0 & 1 & 1 &    \ddots & \ddots & 0 & \vdots \\
	\vdots & \ddots  & \ddots & \ddots &    \ddots &  \ddots &  0\\
	\vdots &   & \ddots &  1 & 1 & 1 & 1\\
	0 & \cdots & \cdots &   0  & 1 & 1 & 1\\
	\end{vmatrix}_n.
	\end{equation*}
	\newline
	Then 
	\begin{equation*}
	M_n=
	\begin{cases}
	0, &\text{  if  } n \equiv 0 \Mod{4} \\  
	2, &\text{  if  } n \equiv 1 \Mod{4} \\
	3, &\text{  if  } n \equiv 2 \Mod{4} \\
	1, &\text{  if  } n \equiv 3 \Mod{4} 
	\end{cases}
	\end{equation*}
\end{theorem}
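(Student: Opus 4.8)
The plan is to reduce the computation to the determinant $B_m$ of the \emph{corner-free} matrix, that is, the $m\times m$ matrix obtained from \eqref{genmatrix} by setting $a=b=0$: the banded Toeplitz matrix with $1$'s on the main diagonal, the first superdiagonal, and the first two subdiagonals. Expanding $B_m$ along its last column, which has exactly two nonzero entries (the $1$'s in the superdiagonal and main-diagonal positions), and then expanding once more along the last row of the resulting $(m-1)\times(m-1)$ minor, one is left with only corner-free determinants of sizes $m-1,m-2,m-3$; this yields the recurrence
\[
B_m \;=\; B_{m-1}-B_{m-2}+B_{m-3},\qquad m\ge 4,
\]
with $B_1=1$, $B_2=0$, $B_3=0$. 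The characteristic polynomial $x^3-x^2+x-1=(x-1)(x^2+1)$ has roots $1,i,-i$, so the sequence $(B_m)$ is periodic with period $4$, and the three initial values give $B_m=1$ for $m\equiv 0,1\pmod 4$ and $B_m=0$ for $m\equiv 2,3\pmod 4$.

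For the theorem itself, observe that the matrix of Theorem~\ref{thm:conj2} is \eqref{genmatrix} with $a=1$ and $b=0$, hence equals $T_n+e_1e_{n-1}^{\top}+e_2e_n^{\top}$, where $T_n$ is the corner-free matrix and $e_j$ is the $j$-th standard basis vector. Expanding the determinant by multilinearity in columns $n-1$ and $n$ splits it into four terms,
\[
M_n \;=\; \det T_n \;+\; \det[\,\cdots,c_{n-1},e_2\,] \;+\; \det[\,\cdots,e_1,c_n\,] \;+\; \det[\,\cdots,e_1,e_2\,],
\]
where $c_{n-1},c_n$ are the corresponding columns of $T_n$; the first term is $B_n$. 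Each of the other three is computed by cofactor expansion: replacing a column by a basis vector collapses one expansion step, and every minor that subsequently appears is a banded all-ones Toeplitz matrix --- either of the original ``lower $2$, upper $1$'' band type or of its transpose type, in both cases with determinant $B_k$ for the relevant $k$ --- except for one triangular all-ones minor of determinant $1$. This produces $\det[\,\cdots,c_{n-1},e_2\,]=\det[\,\cdots,e_1,c_n\,]=(-1)^n(B_{n-2}-B_{n-3})$ and $\det[\,\cdots,e_1,e_2\,]=1$, so
\[
M_n \;=\; B_n + 2(-1)^n\bigl(B_{n-2}-B_{n-3}\bigr) + 1 \;=\; B_n + 2(-1)^n\bigl(B_{n-1}-B_n\bigr) + 1,
\]
the last step using the recurrence. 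Substituting the periodic values of $B_n$ and $B_{n-1}$ according to $n\equiv 0,1,2,3\pmod 4$ produces $0,2,3,1$, which is the assertion. (As a cross-check, expanding $M_n$ directly along its last column: one of the three resulting minors is exactly the $(n-1)\times(n-1)$ matrix of Theorem~\ref{thm:conj1}, of value $A_{n-1}$, which can be fed in; moreover the same multilinear computation carried out for general $a,b$ gives $B_n+b(-1)^{n+1}B_{n-1}+2a(-1)^n(B_{n-1}-B_n)+a^2$, which recovers Theorem~\ref{thm:conj1} as well as the generalisation of Section~3.)

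The step I expect to be the main obstacle is the middle one: tracking all cofactor signs and, more importantly, checking at each stage that the minor produced really is a banded Toeplitz matrix of the stated size, so that its determinant is indeed one of $B_{n-1},B_{n-2},B_{n-3}$ (or $1$). When a deleted row or column does not sit at the first or last index, the resulting matrix is only piecewise Toeplitz; the remedy is to expand first along the unique ``bad'' row, which has the simple shape $(1,1,0,\dots)$ or $(1,0,\dots)$, thereby splitting the matrix into genuinely Toeplitz blocks before continuing. A secondary, routine point is that the closed form above is derived assuming $n$ is large enough that the corner entries $(1,n-1)$ and $(2,n)$ do not overlap the band (so that $B_{n-3}$ is defined and the expansions behave as described); the finitely many remaining small cases are then settled by direct evaluation of the determinant.
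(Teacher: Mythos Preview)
Your approach is correct and genuinely different from the paper's.  Both arguments rest on the same corner-free determinant (your $B_m$, the paper's $C_m$) and the same order-three recurrence with period~$4$.  The divergence is in how $M_n$ is reduced to these building blocks.  The paper expands $M_n$ along the \emph{first column}, producing three $(n-1)\times(n-1)$ minors $A_{n-1}$, $K_{n-1}$, $L_{n-1}$; the last two are each reduced to combinations of $C_k$'s through a separate lemma involving several further cofactor expansions (Lemmas~\ref{lem:kn} and~\ref{lem:ln}), before everything is reassembled into $M_n=C_n(1+2(-1)^{n-1})-2(-1)^{n-1}C_{n-1}+1$.  Your route---writing the matrix as $T_n+e_1e_{n-1}^\top+e_2e_n^\top$ and using column multilinearity---lands on the same closed form (your $B_n+2(-1)^n(B_{n-1}-B_n)+1$ is identical) with visibly less bookkeeping and no auxiliary lemmas.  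It also scales immediately to general $a,b$: your formula $B_n+b(-1)^{n+1}B_{n-1}+2a(-1)^n(B_{n-1}-B_n)+a^2$ reproduces Theorem~\ref{thm:general} of the paper, whereas the paper handles the generalisation by a fresh set of three lemmas (Lemmas~\ref{lem:Gn}--\ref{lem:Pn}).

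One minor point of care in the write-up: for the term $\det[\,\cdots,e_1,c_n\,]$, after the first expansion the ``bad'' piece is not a leading row of shape $(1,1,0,\dots)$ but rather the last column, which reads $(0,\dots,0,1,1)^\top$ (equivalently, in the transpose picture, a trailing row of that shape); expanding along that column gives $B_{n-2}-B_{n-3}$ exactly as you claim, so the obstacle you anticipated is harmless, but the description of where the irregularity sits should be adjusted.  A second cosmetic remark: the paper already uses the symbol $B_n$ for the general $(a,b)$ determinant, so when merging your argument with the paper you will want to rename your corner-free sequence (e.g.\ keep the paper's $C_n$).
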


\section{Proof of conjectures} \label{sec:proof}

To prove the conjectures, we require few lemmas, whose proof are derived in the subsequent sections. We shall first prove Theorem~\ref{thm:conj1}, followed by Theorem~\ref{thm:conj2}.

\subsection{Proof of Theorem \ref{thm:conj1}}

We shall first prove the following lemma.

\begin{lemma}\label{lem:main}
	Let
	\begin{equation*}
	C_{n} = 
	\begin{vmatrix}
	1 & 1 & 0 & \cdots & \cdots & \cdots & 0 \\
	1 & 1 & 1 & 0 & \cdots & \cdots & \vdots \\
	1 & 1 & 1 & 1 & 0 & \cdots & \vdots \\ 
	0 & 1 & 1 & 1 & 1 & \ddots & \vdots \\
	\vdots & \ddots  & \ddots & \ddots &  \ddots & \ddots &  0 \\
	\vdots & \cdots & \ddots & 1 & 1 & 1 & 1 \\
	0 & \cdots & \cdots & 0 & 1 & 1 & 1 \\
	\end{vmatrix}_n .
	\end{equation*}
	\newline
	Then  
	\begin{equation*}
	C_{n} = C_{n-4} \quad  \text{for $n \ge 5$}.
	\end{equation*}
\end{lemma}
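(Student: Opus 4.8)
The plan is to collapse $C_n$ onto smaller members of the same family by a single cofactor expansion, extract a short linear recurrence, and then read off the period-$4$ behaviour from that recurrence.

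First I would expand $C_n$ along its first column. That column has nonzero entries only in rows $1,2,3$, each equal to $1$, so
\[
C_n \;=\; M_1 - M_2 + M_3 ,
\]
where $M_i$ denotes the minor obtained by deleting the first column together with the $i$-th row. The point is that each $M_i$ is, after a short chain of trivial expansions, again the determinant of a matrix of the displayed banded shape. Deleting the first row and column leaves precisely the $(n-1)\times(n-1)$ matrix defining $C_{n-1}$, so $M_1 = C_{n-1}$. In $M_2$ (respectively $M_3$) the submatrix left after the deletion has its first row equal to a standard basis vector, its only nonzero entry lying in the first column; expanding along that row — and in the case of $M_3$ repeating the step once more — strips off one (respectively two) unit rows and columns and leaves exactly the matrices of $C_{n-2}$ and $C_{n-3}$. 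Every such expansion is along a row whose unique nonzero entry sits in the top-left corner, so each contributes sign $+1$; hence $M_2 = C_{n-2}$ and $M_3 = C_{n-3}$, which gives
\[
C_n \;=\; C_{n-1} - C_{n-2} + C_{n-3}, \qquad n \ge 4 .
\]

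With the recurrence established, the lemma is a one-line deduction: applying it with $n$ replaced by $n-1$ (legitimate for $n\ge 5$) gives $C_{n-1} = C_{n-2} - C_{n-3} + C_{n-4}$, and substituting this into $C_n = C_{n-1} - C_{n-2} + C_{n-3}$ collapses the right-hand side to $C_{n-4}$. Equivalently, the characteristic polynomial $x^3 - x^2 + x - 1 = (x-1)(x^2+1)$ has roots $1,\,i,\,-i$, all fourth roots of unity, so every solution of the recurrence is $4$-periodic.

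The step that needs care is the identification of $M_1,M_2,M_3$ with $C_{n-1},C_{n-2},C_{n-3}$: one must track how the atypical ``corner'' rows — the short top rows and the short bottom row of $C_n$ — are transported under the row/column deletions, and confirm that the iterated Laplace expansions really pick up sign $+1$ at each stage. A minor secondary nuisance is the small-$n$ boundary: when $n$ is as small as $4$ or $5$ some of the $\ddots$ blocks in the matrix are empty, so it is cleanest to check the minor identifications (or just the identities $C_4 = C_3 - C_2 + C_1$, $C_5 = C_4 - C_3 + C_2$) directly against the explicit tiny matrices $C_1,\dots,C_5$ before invoking the recurrence in general.
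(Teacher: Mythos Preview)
Your proposal is correct and essentially matches the paper's argument: both derive the three-term recurrence $C_n = C_{n-1} - C_{n-2} + C_{n-3}$ via cofactor expansion and then shift and substitute to obtain $C_n = C_{n-4}$. The only cosmetic difference is that you expand along the first column (three minors at once), whereas the paper expands twice along first rows, passing through an intermediate determinant $T_{n-1}$ before arriving at the same recurrence.
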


\begin{proof}
	Cofactor expansion across the first row yields
	\begin{equation}
	\begin{aligned}
	C_{n} &= (-1)^{1+1} \begin{vmatrix}
	1 & 1 & 0 & \cdots & \cdots & \cdots & 0 \\
	1 & 1 & 1 & 0 & \cdots & \cdots & \vdots \\
	1 & 1 & 1 & 1 & 0 & \cdots & \vdots \\ 
	0 & 1 & 1 & 1 & 1 & \ddots & \vdots \\
	\vdots & \ddots  & \ddots & \ddots &  \ddots & \ddots &  0 \\
	\vdots & \cdots & \ddots & 1 & 1 & 1 & 1 \\
	0 & \cdots & \cdots & 0 & 1 & 1 & 1 \\
	\end{vmatrix}_{n-1}
	+(-1)^{1+2}\begin{vmatrix}
	1 & 1 & 0 & \cdots & \cdots & \cdots & 0 \\
	1 & 1 & 1 & 0 & \cdots & \cdots & \vdots \\
	0 & 1 & 1 & 1 & 0 & \cdots & \vdots \\ 
	0 & 1 & 1 & 1 & 1 & \ddots & \vdots \\
	\vdots & \ddots  & \ddots & \ddots &  \ddots & \ddots &  0 \\
	\vdots & \cdots & \ddots & 1 & 1 & 1 & 1 \\
	0 & \cdots & \cdots & 0 & 1 & 1 & 1 \\
	\end{vmatrix}_{n-1} \\
	&=:  C_{n-1} - T_{n-1}.
	\end{aligned} \notag
	\end{equation}
     From now on, we will write "expansion" instead of "cofactor expansion". We apply the expansion across the first row for matrix $T_{n-1}$, we have
	\begin{equation}
	\begin{aligned}
	T_{n-1} &= (-1)^{1+1} \begin{vmatrix}
	1 & 1 & 0 & \cdots & \cdots & \cdots & 0 \\
	1 & 1 & 1 & 0 & \cdots & \cdots & \vdots \\
	1 & 1 & 1 & 1 & 0 & \cdots & \vdots \\ 
	0 & 1 & 1 & 1 & 1 & \ddots & \vdots \\
	\vdots & \ddots  & \ddots & \ddots &  \ddots & \ddots &  0 \\
	\vdots & \cdots & \ddots & 1 & 1 & 1 & 1 \\
	0 & \cdots & \cdots & 0 & 1 & 1 & 1 \\
	\end{vmatrix}_{n-2} + (-1)^{1+2}\begin{vmatrix}
	1 & 1 & 0 & \cdots & \cdots & \cdots & 0 \\
	0 & 1 & 1 & 0 & \cdots & \cdots & \vdots \\
	0 & 1 & 1 & 1 & 0 & \cdots & \vdots \\ 
	0 & 1 & 1 & 1 & 1 & \ddots & \vdots \\
	\vdots & \ddots  & \ddots & \ddots &  \ddots & \ddots &  0 \\
	\vdots & \cdots & \ddots & 1 & 1 & 1 & 1 \\
	0 & \cdots & \cdots & 0 & 1 & 1 & 1 \\
	\end{vmatrix}_{n-2} \\
	&=:  C_{n-2}-C_{n-3}.
	\end{aligned}  \notag
	\end{equation}
	Thus, $C_{n} = C_{n-1} - C_{n-2}+C_{n-3}$, and hence $ C_{n-1}= C_{n-2} - C_{n-3}+C_{n-4}$. By substituting $C_{n-1}$ we have 
	
	$$C_{n} = C_{n-2} - C_{n-3}+C_{n-4} - C_{n-2}+C_{n-3} = C_{n-4}$$
\end{proof}

The proof of Theorem \ref{thm:conj1} then proceeds as follows.

Expansion $A_{n}$ across the first row and applying some manipulations on the involved determinants as we did in the proof of Lemma~\ref{lem:main}, we get: 

\begin{equation}
\begin{aligned}
A_{n} &=  \begin{vmatrix}
1 & 1 & 0 & 0 & 0 & \cdots & 0 \\
1 & 1 & 1 & 0 & 0 & \cdots & 0 \\
1 & 1 & 1 & 1 & 0 & \cdots & 0 \\ 
0 & 1 & 1 & 1 & 1 & \cdots & 0 \\
\vdots &   & \ddots & \ddots &  \ddots & \ddots &  \vdots \\
0 & \cdots & \cdots & 1 & 1 & 1 & 1 \\
0 & \cdots & \cdots & 0 & 1 & 1 & 1 \\
\end{vmatrix}_{n-1} - 
\begin{vmatrix}
1 & 1 & 0 & 0 & 0 & \cdots & 0 \\
1 & 1 & 1 & 0 & 0 & \cdots & 0 \\
0 & 1 & 1 & 1 & 0 & \cdots & 0 \\ 
0 & 1 & 1 & 1 & 1 & \cdots & 0 \\
\vdots &   & \ddots & \ddots &  \ddots & \ddots &  \vdots \\
0 & \cdots & \cdots & 1 & 1 & 1 & 1 \\
0 & \cdots & \cdots & 0 & 1 & 1 & 1 \\
\end{vmatrix}_{n-1} \\
&+(-1)^{n+1}\begin{vmatrix}
1 & 1 & 1 & 0 & 0 & \cdots & 0 \\
1 & 1 & 1 & 1 & 0 & \cdots & 0 \\
0 & 1 & 1 & 1 & 1 & \cdots & 0 \\ 
\vdots &   & \ddots & \ddots &  \ddots & \ddots &  \vdots \\
0 & \cdots & \cdots & 1 & 1 & 1 & 1 \\
0 & \cdots & \cdots & 0 & 1 & 1 & 1 \\
0 & \cdots & \cdots & 0 & 0 & 1 & 1 \\
\end{vmatrix}_{n-1}
\\
&= C_{n-1} - (C_{n-2} - C_{n-3}) + (-1)^{n+1} C_{n-1} \\
&= C_{n} + (-1)^{n+1} C_{n-1}.
\end{aligned} \notag
\end{equation}
\newline
The first four $C_n$ are as follows:
$$ C_{1} = 1, \; C_{2} = \begin{vmatrix}
1 & 1 \\
1 & 1 
\end{vmatrix} = 0, \; C_{3} = \begin{vmatrix}
1 & 1 & 0 \\
1 & 1  & 1 \\
1 & 1 & 1
\end{vmatrix} = 0, \; C_4=\begin{vmatrix}
1 & 1 & 0 & 0 \\
1 & 1  & 1 & 0\\
1 & 1 & 1 & 1 \\
0 & 1 & 1 & 1 
\end{vmatrix} = 1. $$
\newline
Using Lemma \ref{lem:main} we can conclude that  $C_{4k+1}=1$, $C_{4k+2}=0$, $C_{4k+3}=0$ and $C_{4k+4}=1$ for $k=0,1,2,..,n$.
Therefore, the statement of the theorem  holds since $A_n=C_n+(-1)^{n+1}C_{n-1}$. 

\subsection{Proof of Theorem \ref{thm:conj2}}

To prove Theorem  \ref{thm:conj2}, we first prove the following two lemmas.

\begin{lemma}\label{lem:kn}
	Let
	\begin{equation*}
	K_{n} = 
	\begin{vmatrix}
	1 & 0 & 0 & \cdots & 0 & 1  & 0\\
	1 & 1 & 1 & \ddots & \ddots & 0 & 0  \\
	1 & 1 & \ddots &  \ddots & \ddots & \ddots & 0 \\ 
	0 & 1 & \ddots &  \ddots & \ddots & \ddots & \vdots \\
	\vdots &  \ddots  & \ddots &\ddots &  \ddots &  \ddots &  0\\
	0 & \cdots & 0 & 1 & 1 & 1 & 1\\
	0 & \cdots & \cdots & 0  & 1 & 1 & 1\\
	\end{vmatrix}_n
	\end{equation*}
	Then
	\newline 
	$$K_{n} = (C_{n-2} - C_{n-3})(1 + (-1)^{n}) + C_{n-4}.$$
\end{lemma}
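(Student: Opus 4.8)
The plan is to mimic the double cofactor expansion behind Lemma~\ref{lem:main} and express $K_n$ through the numbers $C_k$. First I would expand $K_n$ across its first row, which has only two nonzero entries---both equal to $1$, sitting in columns $1$ and $n-1$. This gives
\[
K_n \;=\; |P|_{n-1} \;+\; (-1)^{n}\,|Q|_{n-1},
\]
where $P$ is $K_n$ with row $1$ and column $1$ removed and $Q$ is $K_n$ with row $1$ and column $n-1$ removed (the sign $(-1)^{1+(n-1)}=(-1)^{n}$ accounts for the second term, the first term having sign $+1$). The rows of $P$ are rows $2,\dots,n$ of $K_n$---which coincide with rows $2,\dots,n$ of $C_n$---with the first column deleted, and after reindexing these are exactly the rows defining $C_{n-1}$; hence $|P|_{n-1}=C_{n-1}$.

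The main work is $|Q|_{n-1}$. Setting $m=n-1$, one checks that $Q$ is the $m\times m$ banded matrix whose $i$-th row has $1$'s exactly in columns $i-1,i,i+1,i+2$ (intersected with $\{1,\dots,m\}$), with the single exception that its entry in row $m-2$ and column $m$ equals $0$ rather than $1$; equivalently, $Q$ agrees with the transpose of $C_m$ except in that one entry. I would then expand $|Q|_m$ along its \emph{last} column, which is nonzero only in rows $m-1$ and $m$:
\[
|Q|_m \;=\; -\,|Q'|_{m-1} \;+\; |Q''|_{m-1},
\]
with $Q'$ obtained by deleting row $m-1$ and column $m$, and $Q''$ by deleting row $m$ and column $m$. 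Since both deletions remove column $m$, they also remove the lone anomalous position, so $Q''$ is literally the transpose of $C_{m-1}$, giving $|Q''|_{m-1}=\det C_{m-1}^{\mathsf{T}}=C_{m-1}$. The matrix $Q'$ has a single $1$ in its last row (column $m-1$); expanding along that row turns $|Q'|_{m-1}$ into $\det C_{m-2}^{\mathsf{T}}=C_{m-2}$. Hence $|Q|_{n-1}=|Q|_m=C_{m-1}-C_{m-2}=C_{n-2}-C_{n-3}$.

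Combining the two pieces yields $K_n=C_{n-1}+(-1)^{n}\bigl(C_{n-2}-C_{n-3}\bigr)$. To get the advertised form, I would use the three-term recurrence $C_{k}=C_{k-1}-C_{k-2}+C_{k-3}$ ($k\ge 4$) established inside the proof of Lemma~\ref{lem:main}: taking $k=n-1$ gives $C_{n-1}=(C_{n-2}-C_{n-3})+C_{n-4}$, so
\[
K_n \;=\; (C_{n-2}-C_{n-3})+C_{n-4}+(-1)^{n}(C_{n-2}-C_{n-3}) \;=\; (C_{n-2}-C_{n-3})\bigl(1+(-1)^{n}\bigr)+C_{n-4},
\]
and the finitely many small values of $n$ for which the banded picture degenerates can be checked directly. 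The step I expect to be most delicate is the bookkeeping for the minors $P,Q,Q',Q''$: one has to track exactly which rows and columns survive each deletion and confirm that the only deviation from a (transposed) $C$-matrix is the single $0$ in row $m-2$, column $m$, of $Q$---a deviation which, fortunately, disappears the moment column $m$ is struck out. Everything else is the same kind of elementary determinant manipulation as in Lemma~\ref{lem:main}, together with the identity $\det X=\det X^{\mathsf{T}}$ applied to the banded matrices $C_k$.
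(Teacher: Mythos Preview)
Your argument is correct. Both proofs reduce $K_n$ to the numbers $C_k$ via repeated cofactor expansion and the identity $\det X=\det X^{\mathsf T}$, but the order of expansion differs: the paper expands along the \emph{last column} first (yielding two minors $R_{n-1}$, $P_{n-1}$, each of which is then expanded across its first row, producing four $C$-pieces that assemble directly into $(C_{n-2}-C_{n-3})(1+(-1)^n)+C_{n-4}$), whereas you expand across the \emph{first row} first, so one of the two minors is immediately $C_{n-1}$ and only the other ($Q$) needs further work. Your route is one expansion shorter, at the cost of an extra algebraic step at the end: you land on $K_n=C_{n-1}+(-1)^n(C_{n-2}-C_{n-3})$ and must invoke the three-term recurrence $C_{n-1}=C_{n-2}-C_{n-3}+C_{n-4}$ from the proof of Lemma~\ref{lem:main} to match the stated form, while the paper's ordering makes the indices $n-2,n-3,n-4$ appear directly. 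Either way the bookkeeping you flag---tracking the single anomalous entry of $Q$ and seeing that it disappears once column $m$ is removed---is the only delicate point, and your description of it is accurate.
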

\begin{proof}
	Expansion of $K_n$ along the last column yields
	\begin{equation}
	\begin{aligned}
	K_{n} &= \begin{vmatrix}
	1 & 0 & 0 & \cdots & 0 & 0  & 1\\
	1 & 1 & 1 & \ddots & \ddots & 0 & 0  \\
	1 & 1 & \ddots &  \ddots & \ddots & \ddots & 0 \\ 
	0 & 1 & \ddots &  \ddots & \ddots & \ddots & \vdots \\
	\vdots &  \ddots  & \ddots &\ddots &  \ddots &  \ddots &  0\\
	0 & \cdots & 0 & 1 & 1 & 1 & 1\\
	0 & \cdots & 0 & 0  & 1 & 1 & 1\\
	\end{vmatrix}_{n-1}
	- 
	\begin{vmatrix}
	1 & 0 & 0 &  \cdots & 0 & 0  & 1\\
	1 & 1 & 1 &  \ddots & \ddots & 0 & 0  \\
	1 & 1 & \ddots &  \ddots & \ddots & \ddots & 0 \\ 
	0 & 1 & \ddots & \ddots & \ddots & \ddots & \vdots \\
	\vdots & \ddots  & \ddots & \ddots &    \ddots &  \ddots &  0 \\
	0 & \cdots & 0  & 1 & 1 & 1 & 1\\
	0 & \cdots & \cdots   & 0  & 0 & 1 & 1\\
	\end{vmatrix}_{n-1} \\
	&=: R_{n-1} - P_{n-1}. 
	\end{aligned} \notag
	\end{equation}
	
	$R_{n-1}$ and $P_{n-1}$ are now expressible in terms of $C_n$ after the expansion across the first row: 
	\begin{equation}
	\begin{aligned}
	R_{n-1} &= \begin{vmatrix}
	1 & 1 & 0 & \cdots & \cdots & \cdots & 0 \\
	1 & 1 & 1 & 0 & \cdots & \cdots & \vdots \\
	1 & 1 & 1 & 1 & 0 & \cdots & \vdots \\ 
	0 & 1 & 1 & 1 & 1 & \ddots & \vdots \\
	\vdots & \ddots  & \ddots & \ddots &  \ddots & \ddots &  0 \\
	\vdots & \cdots & \ddots & 1 & 1 & 1 & 1 \\
	0 & \cdots & \cdots & 0 & 1 & 1 & 1 \\
	\end{vmatrix}_{n-2}  + (-1)^{n}
	\begin{vmatrix}
	1 & 1 & 1 & 0 & \cdots & \cdots & 0  \\
	1 & 1 & 1 & 1 & \ddots  & \cdots & \vdots \\
	0 & 1 & 1 & 1 & 1 & \ddots & \vdots  \\ 
	\vdots & 0 & 1 & 1 & 1 & \ddots & 0 \\
	\vdots & \ddots  & \ddots & \ddots &  \ddots & \ddots &  1\\
	\vdots & \cdots & \ddots & 0 & 1 & 1 & 1  \\
	0 & \cdots & \cdots & \cdots & 0 & 1 & 1  \\
	\end{vmatrix}_{n-2} \\
	&= C_{n-2} + (-1)^{n}C_{n-2}.
	\end{aligned} \notag
	\end{equation}
	
	\noindent and 
	\begin{equation}
	\begin{aligned}
	P_{n-1} &=\begin{vmatrix}
	1 & 1 & 0 & \cdots & \cdots & \cdots & 0 \\
	1 & 1 & 1 & 0 & \cdots & \cdots & \vdots \\
	1 & 1 & 1 & 1 & 0 & \cdots & \vdots \\ 
	0 & 1 & 1 & 1 & 1 & \ddots & \vdots \\
	\vdots & \ddots  & \ddots & \ddots &  \ddots & \ddots &  0 \\
	\vdots & \cdots & \ddots & 1 & 1 & 1 & 1 \\
	0 & \cdots & \cdots & 0 & 0 & 1 & 1 \\
	\end{vmatrix}_{n-2} + (-1)^{n}
	\begin{vmatrix}
	1 & 1 & 1 & 0 & \cdots & \cdots & 0  \\
	1 & 1 & 1 & 1 & \ddots  & \cdots & \vdots \\
	0 & 1 & 1 & 1 & 1 & \ddots & \vdots  \\ 
	\vdots & 0 & 1 & 1 & 1 & \ddots & 0 \\
	\vdots & \ddots  & \ddots & \ddots &  \ddots & \ddots &  1\\
	\vdots & \cdots & \ddots & 0 & 1 & 1 & 1  \\
	0 & \cdots & \cdots & \cdots & 0 & 0 & 1  \\
	\end{vmatrix}_{n-2} \\
	&=:  P_{n-1,1} + (-1)^{n}P_{n-1,2}.
	\end{aligned} \notag
	\end{equation}
	Notice that, applying the last column expansion,
	\begin{equation}
	\begin{aligned}
	P_{n-1,1} &= 
	\begin{vmatrix}
	1 & 1 & 0 & \cdots & \cdots & \cdots & 0 \\
	1 & 1 & 1 & 0 & \cdots & \cdots & \vdots \\
	1 & 1 & 1 & 1 & 0 & \cdots & \vdots \\ 
	0 & 1 & 1 & 1 & 1 & \ddots & \vdots \\
	\vdots & \ddots  & \ddots & \ddots &  \ddots & \ddots &  0 \\
	\vdots & \cdots & \ddots & 1 & 1 & 1 & 1 \\
	0 & \cdots & \cdots & 0 & 1 & 1 & 1 \\
	\end{vmatrix}_{n-3}  - 
	\begin{vmatrix}
	1 & 1 & 0 & \cdots & \cdots & \cdots & 0 \\
	1 & 1 & 1 & 0 & \cdots & \cdots & \vdots \\
	1 & 1 & 1 & 1 & 0 & \cdots & \vdots \\ 
	0 & 1 & 1 & 1 & 1 & \ddots & \vdots \\
	\vdots & \ddots  & \ddots & \ddots &  \ddots & \ddots &  0 \\
	\vdots & \cdots & \ddots & 1 & 1 & 1 & 1 \\
	0 & \cdots & \cdots & 0 & 0 & 0 & 1 \\
	\end{vmatrix}_{n-3}  \\
	&= 	C_{n-3} - C_{n-4}.
	\end{aligned} \notag
	\end{equation}
	Furthermore, by applying the last row expansion, 
	$$P_{n-1,2} = C_{n-3}.$$
	Combining all results,
	\begin{eqnarray*}
		K_{n} &=& R_{n-1} - P_{n-1} \\
		&=&  C_{n-2} + (-1)^{n}C_{n-2} - (C_{n-3} - C_{n-4}  + (-1)^{n}C_{n-3}) \\
		&=& (C_{n-2} - C_{n-3})(1 + (-1)^{n}) + C_{n-4}.
	\end{eqnarray*}
\end{proof}

\begin{lemma} \label{lem:ln}
	Let
	\begin{equation}
	L_{n}= 
	\begin{vmatrix}
	1 & 0 & 0 &   \cdots & 0  & 1  & 0\\
	1 & 1 & 0 &   \ddots & \ddots &  \ddots & 1  \\
	1 & 1 & 1 &    1 & \ddots  & \ddots & 0 \\ 
	0 & 1 & 1 &    \ddots & \ddots & \ddots & \vdots \\
	\vdots & \ddots  & \ddots & \ddots &    \ddots &  \ddots &  0\\
	\vdots &   & \ddots &  1 & 1 & 1 & 1\\
	0 & \cdots & \cdots &   0  & 1 & 1 & 1\\
	\end{vmatrix}_n. \notag
	\end{equation}
	Then 
	\begin{equation*}
		L_{n} = C_{n-2}(1+ (-1)^{n}) + (-1)^{n}(C_{n-3}-2C_{n-4}+C_{n-5}) + 1.
	\end{equation*}

\end{lemma}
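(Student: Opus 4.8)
The plan is to compute $L_n$ by the same mechanism used for $K_n$ in Lemma~\ref{lem:kn}: apply cofactor expansion repeatedly, each time along a row or column with only two or three nonzero entries, until every surviving minor is either a standard banded determinant $C_m$, a ``shifted'' banded determinant (which one further expansion turns into a difference of two consecutive $C$'s, exactly as $T_{n-1}=C_{n-2}-C_{n-3}$ in the proof of Lemma~\ref{lem:main}), or a triangular determinant equal to $1$. Since $L_n$ differs from $C_n$ only in its first two rows — row~$1$ has its second entry deleted and an extra $1$ placed in column $n-1$, while row~$2$ has its third entry deleted and an extra $1$ placed in column $n$ — the two ``corner'' $1$s and the shortened row~$2$ are precisely the features that must be peeled away.

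Concretely, I would first expand along the last column, which has nonzero entries only in rows $2$, $n-1$ and $n$; this yields three cofactors carrying signs $(-1)^{n}$, $-1$ and $+1$. In the $(n-1,n)$ and $(n,n)$ cofactors column $n$ has been removed, so row~$2$ has collapsed to $e_1+e_2$ and row~$1$ to $e_1+e_{n-1}$ (with $e_{n-1}$ now the last coordinate); expanding each of these once along the shortened row~$2$ and once along the shortened first row reduces them to combinations of $C_{n-2}$, $C_{n-3}$ and $C_{n-4}$, reproducing the $K_n$-type terms. The $(2,n)$ cofactor — deletion of row~$2$ and column~$n$ — is the only genuinely new ingredient: its first row is again $e_1+e_{n-1}$, and expanding along it splits the minor into a banded determinant that collapses (after one more reduction) to a $C_{n-3},C_{n-4},C_{n-5}$ combination, plus a unit-triangular determinant contributing the free constant $1$ in the statement. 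The overall sign $(-1)^{2+n}=(-1)^{n}$ on this cofactor is the source of the bracket $(-1)^{n}(C_{n-3}-2C_{n-4}+C_{n-5})$ and of the parity-dependent coefficient $1+(-1)^{n}$ of $C_{n-2}$.

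Finally I would collect all contributions and simplify using the recursion $C_m=C_{m-1}-C_{m-2}+C_{m-3}$ from the proof of Lemma~\ref{lem:main} (so that any residual shifted minor becomes $C_{m-1}-C_m$, and, if a more compact form is desired, $C_{m-3}-2C_{m-4}+C_{m-5}=C_{m-6}-C_{m-4}$), together with the $4$-periodicity $C_m=C_{m-4}$, until the expression matches $C_{n-2}(1+(-1)^{n})+(-1)^{n}(C_{n-3}-2C_{n-4}+C_{n-5})+1$ exactly; checking $n=5,6,7,8$ against direct evaluation is a cheap safeguard. The main obstacle is purely organizational: carrying the deleted-row / deleted-column bookkeeping correctly through three nested expansions, getting every cofactor sign right (especially the $(-1)^{n}$ separating the even and odd cases), and correctly identifying each surviving minor — in particular distinguishing a genuine $C_m$ from a shifted one that still needs an extra expansion, and locating the triangular minor responsible for the extra $+1$.
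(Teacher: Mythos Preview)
Your plan is correct and follows essentially the same strategy as the paper's proof: iterated cofactor expansion along sparse rows/columns until every surviving minor is a $C_m$, a once-shifted banded minor (resolved by one further expansion into a difference of consecutive $C$'s), or the unit-triangular determinant supplying the free $+1$. The only difference is cosmetic: the paper opens by expanding along the \emph{first row} (nonzero entries in columns $1$ and $n-1$), writing $L_n=D_{n-1}+(-1)^nE_{n-1}$ and then treating $D_{n-1}$ and $E_{n-1}$ separately, rather than along the last column as you propose; both orderings land on the same collection of $C$-terms.
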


\begin{proof}
	Expansion across the first row leads to
	\begin{equation}
	\begin{aligned}
	L_{n} &= \begin{vmatrix}
	1 & 0 & 0 &   \cdots & 0  & 0  & 1\\
	1 & 1 & 1 &   \ddots & \ddots &  \ddots & 0  \\
	1 & 1 & 1 &    1 & \ddots  & \ddots & 0 \\ 
	0 & 1 & 1 &    \ddots & \ddots & \ddots & \vdots \\
	\vdots & \ddots  & \ddots & \ddots &    \ddots &  \ddots &  0\\
	\vdots &   & \ddots &  1 & 1 & 1 & 1\\
	0 & \cdots & \cdots &   0  & 1 & 1 & 1\\
	\end{vmatrix}_{n-1}
	+ (-1)^{n}\begin{vmatrix}
	1 & 1 & 0 & 0 &  \cdots & 0 & 1  \\
	1 & 1 & 1 & 1 & \cdots & \ddots & 0\\
	0 & 1 & 1 & 1 & \ddots & \vdots  & \vdots \\ 
	\vdots  &  \ddots & \ddots   &    \ddots  & \ddots&  1 & 0 \\
	\vdots & \ddots  & \ddots & \ddots &  \ddots &  1 & 0\\
	\vdots & \cdots & \ddots & \ddots & 1 & 1 & 1 \\
	0 & \cdots & \cdots & \cdots & 0 & 1  & 1 \\
	\end{vmatrix}_{n-1} \\
	&=: D_{n-1} + (-1)^{n} E_{n-1}.
	\end{aligned} \notag
	\end{equation}
	\newline
	Expansion of $D_{n,1}$ across the first row gives us
	\begin{equation}
	\begin{aligned}
	D_{n-1} &= \begin{vmatrix}
	1 & 1 & 0 & \cdots & \cdots & \cdots & 0 \\
	1 & 1 & 1 & 0 & \cdots & \cdots & \vdots \\
	1 & 1 & 1 & 1 & 0 & \cdots & \vdots \\ 
	0 & 1 & 1 & 1 & 1 & \ddots & \vdots \\
	\vdots & \ddots  & \ddots & \ddots &  \ddots & \ddots &  0 \\
	\vdots & \cdots & \ddots & 1 & 1 & 1 & 1 \\
	0 & \cdots & \cdots & 0 & 1 & 1 & 1 \\
	\end{vmatrix}_{n-2}
	+ (-1)^{n} \begin{vmatrix}
	1 & 1 & 1 & 0 & \cdots & \cdots & 0 \\
	1 & 1 & 1 & 1 & \ddots & \cdots & \vdots \\
	0 & 1 & 1 & 1 & 1 & \ddots & \vdots \\ 
	\vdots &  \ddots & \ddots & \ddots &  \ddots & \ddots &  0 \\
	\vdots & \cdots & \ddots & 1 & 1 & 1 & 1 \\
	\vdots & \cdots & \cdots & \ddots & 1 & 1 & 1 \\
	0 & \cdots & \cdots & 0 & 0 & 1 & 1 \\
	\end{vmatrix}_{n-2} \\ 
	&= 
	C_{n-2} + (-1)^{n}C_{n-2}.
	\end{aligned} \notag
	\end{equation}
	For $E_{n-1}$, applying expansion along the first column, we obtain 
	\begin{equation}
	\begin{aligned}
	E_{n-1} &= \begin{vmatrix}
	1 & 1 & 1 & 0 &  \cdots & 0 & 0  \\
	1 & 1 & 1 & 1 & \cdots & \ddots & 0\\
	0 & 1 & 1 & 1 & \ddots & \vdots  & \vdots \\ 
	\vdots  &  \ddots & \ddots   &    \ddots  & \ddots&  1 & 0 \\
	\vdots & \ddots  & \ddots & \ddots &  \ddots &  1 & 0\\
	\vdots & \cdots & \ddots & \ddots & 1 & 1 & 1 \\
	0 & \cdots & \cdots & \cdots & 0 & 1  & 1 \\
	\end{vmatrix}_{n-2} 
	-  \begin{vmatrix}
	1 & 0 & 0 & 0 &  \cdots & 0 & 1  \\
	1 & 1 & 1 & 1 & \cdots & \ddots & 0\\
	0 & 1 & 1 & 1 & \ddots & \vdots  & \vdots \\ 
	\vdots  &  \ddots & \ddots   &    \ddots  & \ddots&  1 & 0 \\
	\vdots & \ddots  & \ddots & \ddots &  \ddots &  1 & 0\\
	\vdots & \cdots & \ddots & \ddots & 1 & 1 & 1 \\
	0 & \cdots & \cdots & \cdots & 0 & 1  & 1 \\
	\end{vmatrix}_{n-2} \\
	&=: F_{n-2} - H_{n-2}.
	\end{aligned} \notag
	\end{equation}
	The two terms on the right-hand side are computed as follows:
	\begin{equation}
	\begin{aligned}
	F_{n-2} &= \begin{vmatrix}
	1 & 1 & 1 & 0 &  \cdots & 0 & 0  \\
	1 & 1 & 1 & 1 & \cdots & \ddots & 0\\
	0 & 1 & 1 & 1 & \ddots & \vdots  & \vdots \\ 
	\vdots  &  \ddots & \ddots   &    \ddots  & \ddots&  1 & 0 \\
	\vdots & \ddots  & \ddots & \ddots &  \ddots &  1 & 1\\
	\vdots & \cdots & \ddots & \ddots & 1 & 1 & 1 \\
	0 & \cdots & \cdots & \cdots & 0 & 1  & 1 \\
	\end{vmatrix}_{n-3} 
	- \begin{vmatrix}
	1 & 1 & 1 & 0 &  \cdots & 0 & 0  \\
	1 & 1 & 1 & 1 & \cdots & \ddots & 0\\
	0 & 1 & 1 & 1 & \ddots & \vdots  & \vdots \\ 
	\vdots  &  \ddots & \ddots   &    \ddots  & \ddots&  1 & 0 \\
	\vdots & \ddots  & \ddots & \ddots &  \ddots &  1 & 1\\
	\vdots & \cdots & \ddots & \ddots & 1 & 1 & 1 \\
	0 & \cdots & \cdots & \cdots & 0 & 0 & 1 \\
	\end{vmatrix}_{n-3} \\
	&= C_{n-3} -C_{n-4},
	\end{aligned} \notag
	\end{equation}
	after expansion along the last column. Next, expanding $H_{n-2}$ across the first row,
	\begin{equation}
	\begin{aligned}
	H_{n-2}&= \begin{vmatrix}
	1 & 1 & 1 & 0 &  \cdots & 0 & 0  \\
	1 & 1 & 1 & 1 & \ddots & \ddots & 0\\
	0 & 1 & 1 & 1 & \ddots & \ddots  & \vdots \\ 
	\vdots  &  \ddots & \ddots   &    \ddots  & \ddots&  1 & 0 \\
	\vdots & \ddots  & \ddots & \ddots &  \ddots &  1 & 0\\
	\vdots & \ddots & \ddots & \ddots & 1 & 1 & 1 \\
	0 & \cdots & \cdots & \cdots & 0 & 1  & 1 \\
	\end{vmatrix}_{n-3}
	+ (-1)^{n-1}\begin{vmatrix}
	1 & 1 & 1 & 0 &  \cdots & 0 & 0  \\
	0 & 1 & 1 & 1 & \ddots & \ddots & 0\\
	\vdots & \ddots & 1 & 1 & \ddots & \ddots  & \vdots \\ 
	\vdots  &  \ddots & \ddots   &    \ddots  & \ddots&  1 & 0 \\
	\vdots & \ddots  & \ddots & \ddots &  \ddots &  1 & 1\\
	\vdots & \ddots & \ddots & \ddots & \ddots & 1 & 1 \\
	0 & \cdots & \cdots & \cdots & \cdots & 0 & 1 \\
	\end{vmatrix}_{n-3} \\ 
	&=Q_{n-3} + (-1)^{n-1}.
	\end{aligned} \notag
	\end{equation}
	Finally, using the last column $Q_{n-3}$,
	\begin{equation}
	\begin{aligned}
	Q_{n-3} &= \begin{vmatrix}
	1 & 1 & 1 & 0 &  \cdots & 0 & 0  \\
	1 & 1 & 1 & 1 & \cdots & \ddots & 0\\
	0 & 1 & 1 & 1 & \ddots & \vdots  & \vdots \\ 
	\vdots  &  \ddots & \ddots   &    \ddots  & \ddots&  1 & 0 \\
	\vdots & \ddots  & \ddots & \ddots &  \ddots &  1 & 1\\
	\vdots & \cdots & \ddots & \ddots & 1 & 1 & 1 \\
	0 & \cdots & \cdots & \cdots & 0 & 1  & 1 \\
	\end{vmatrix}_{n-4} 
	- \begin{vmatrix}
	1 & 1 & 1 & 0 &  \cdots & 0 & 0  \\
	1 & 1 & 1 & 1 & \cdots & \ddots & 0\\
	0 & 1 & 1 & 1 & \ddots & \vdots  & \vdots \\ 
	\vdots  &  \ddots & \ddots   &    \ddots  & \ddots&  1 & 0 \\
	\vdots & \ddots  & \ddots & \ddots &  \ddots &  1 & 1\\
	\vdots & \cdots & \ddots & \ddots & 1 & 1 & 1 \\
	0 & \cdots & \cdots & \cdots & 0 & 0 & 1 \\
	\end{vmatrix}_{n-4} \\
	&= C_{n-4} -C_{n-5}.
	\end{aligned} \notag
	\end{equation}
	Thus, $H_{n-2}=C_{n-4} - C_{n-5} + (-1)^{n-1}$.
	
	Combining the results, we have
	\begin{eqnarray*}
		E_{n-1} &=& F_{n-2}-H_{n-2}= C_{n-3} - C_{n-4} - (C_{n-4}-C_{n-5} + (-1)^{n-1})  \\
		&=& C_{n-3} - 2C_{n-4} + C_{n-5} + (-1)^{n}.
	\end{eqnarray*}
	Hence,		
	$$ L_{n} = C_{n-2}(1+ (-1)^{n}) + (-1)^{n}(C_{n-3}-2C_{n-4}+C_{n-5}) + 1.$$
\end{proof}

We are now in the position to prove the theorem. Expansion along the first column of the matrix in~\eqref{matM} and applying Lemma \ref{lem:kn} and \ref{lem:ln} we have  

\begin{equation}
\begin{aligned}		
M_{n} &= \begin{vmatrix}
1 & 1 & 0 & \cdots & \cdots & \cdots & 1 \\
1 & 1 & 1 & 0 & \cdots & \cdots & \vdots \\
1 & 1 & 1 & 1 & 0 & \cdots & \vdots \\ 
0 & 1 & 1 & 1 & 1 & \ddots & \vdots \\
\vdots & \ddots  & \ddots & \ddots &  \ddots & \ddots &  0 \\
\vdots & \cdots & \ddots & 1 & 1 & 1 & 1 \\
0 & \cdots & \cdots & 0 & 1 & 1 & 1 \\
\end{vmatrix}_{n-1}
- \begin{vmatrix}
1 & 0 & 0 & \cdots & \cdots & 1 & 0 \\
1 & 1 & 1 & 0 & \cdots & \cdots & \vdots \\
1 & 1 & 1 & 1 & 0 & \cdots & \vdots \\ 
0 & 1 & 1 & 1 & 1 & \ddots & \vdots \\
\vdots & \ddots  & \ddots & \ddots &  \ddots & \ddots &  0 \\
\vdots & \cdots & \ddots & 1 & 1 & 1 & 1 \\
0 & \cdots & \cdots & 0 & 1 & 1 & 1 \\
\end{vmatrix}_{n-1}\\
&+
\begin{vmatrix}
1 & 0 & 0 & \cdots & \cdots & 1 & 0 \\
1 & 1 & 0 & 0 & \cdots & \cdots & 1 \\
1 & 1 & 1 & 1 & 0 & \cdots & 0 \\ 
0 & 1 & 1 & 1 & 1 & \ddots & \vdots \\
\vdots & \ddots  & \ddots & \ddots &  \ddots & \ddots &  0 \\
\vdots & \cdots & \ddots & 1 & 1 & 1 & 1 \\
0 & \cdots & \cdots & 0 & 1 & 1 & 1 \\
\end{vmatrix}_{n-1} \\
&= A_{n-1} - K_{n-1} + L_{n-1} \\
&= C_{n-1} +(-1)^{n}C_{n-2} - (C_{n-3}-C_{n-4})(1+(-1)^{n}) - C_{n-5} \\
&+ C_{n-3}(1+ (-1)^{n-1})+ (-1)^{n-1}(C_{n-4}-2C_{n-5}+C_{n-6}) + 1 \\
&= C_{n}(1 + 2 (-1)^{n-1}) - 2(-1)^{n-1}C_{n-1}+1.
\end{aligned} \notag
\end{equation}
with the last line above obtained after using Lemma  \ref{lem:main}. Use of the property of $C_{n}$  (cf. the last paragraph of Section 2.1)  completes the proof.

\section{Generalization}

In this section, we give a proof for a more general result than that of Theorem~\ref{thm:conj1} and~\ref{thm:conj2}. The proof technique is again elementary. We first state our general result in Theorem~\ref{thm:general}.

\begin{theorem} \label{thm:general}
Let $B_n$ be the determinant of the matrix \eqref{genmatrix}, with  $n \ge 5$; i.e.,   
	\begin{equation*}
	B_{n} =
	\begin{vmatrix}
	1 & 1 & 0 &  \cdots & 0 & a & b\\
	1 & 1 & 1 & \ddots & \ddots & 0 & a \\
	1 & 1 & 1 & 1 &  \ddots & \ddots & 0 \\
	0 & 1 & 1 & 1 & \ddots & \ddots  & \vdots\\
	\vdots & \ddots & \ddots & \ddots &  \ddots & \ddots &  0 \\
	\vdots & \ddots & \ddots & 1 & 1 & 1 & 1 \\
	0 & \cdots & \cdots & 0 & 1 & 1 & 1 \\
	\end{vmatrix}_n. \notag
	\end{equation*}
	Then 
	\begin{equation*}
	B_n =
	\begin{cases}
	(a-1)^{2}, &\text{  if  } n \equiv 0 \Mod{4}, \\ 
	a^{2} + b + 1,  &\text{  if  } n \equiv 1 \Mod{4}, \\  
	a^{2} + 2a - b, &\text{  if  } n \equiv 2 \Mod{4}, \\
	a^{2},  &\text{  if  } n \equiv 3 \Mod{4}.
	\end{cases}
	\end{equation*}	
\end{theorem}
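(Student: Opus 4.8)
The plan is to isolate the dependence of $B_n$ on $a$ and $b$ by multilinearity of the determinant in the two rows that carry these parameters. Only the first two rows of the matrix in~\eqref{genmatrix} involve $a,b$: writing $w_1,\dots,w_n$ for the rows of the unperturbed banded matrix (so that $w_1=e_1+e_2$, $w_2=e_1+e_2+e_3$, and $\det(w_1,\dots,w_n)=C_n$ in the sense of Lemma~\ref{lem:main}, with $e_j$ the $j$-th standard basis vector viewed as a row), the first row of $B_n$ is $w_1+a\,e_{n-1}+b\,e_n$ and the second row is $w_2+a\,e_n$. Expanding the determinant multilinearly in these two rows gives six terms, and the one carrying the coefficient $ab$ has its first two rows both equal to $e_n$, hence vanishes. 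Therefore
\begin{equation*}
B_n = C_n + \beta\,a + \gamma\,b + \delta\,a^2,
\end{equation*}
where $\beta=\det(w_1,e_n,w_3,\dots,w_n)+\det(e_{n-1},w_2,w_3,\dots,w_n)$, $\gamma=\det(e_n,w_2,w_3,\dots,w_n)$ and $\delta=\det(e_{n-1},e_n,w_3,\dots,w_n)$. In particular $B_n$ is quadratic in $a$, affine in $b$, and carries no $ab$-term, which already matches the shape of the asserted formula.

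Next I would pin down the coefficients using three evaluations together with one short minor computation. Taking $a=b=0$ gives $B_n(0,0)=C_n$. Taking $a=0,\ b=1$ turns~\eqref{genmatrix} into exactly the matrix $A_n$ of Theorem~\ref{thm:conj1} (same band, same corner position), so $C_n+\gamma=A_n$, i.e.\ $\gamma=A_n-C_n=(-1)^{n+1}C_{n-1}$ by the identity $A_n=C_n+(-1)^{n+1}C_{n-1}$ established in Section~2.1. Taking $a=1,\ b=0$ turns~\eqref{genmatrix} into the matrix $M_n$ of Theorem~\ref{thm:conj2}, so $C_n+\beta+\delta=M_n$. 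It remains to compute $\delta=\det(e_{n-1},e_n,w_3,\dots,w_n)$. Expanding this $n\times n$ determinant along its first row (a single $1$ in column $n-1$) and then along the first row of the resulting minor (a single $1$ in its last column) removes two rows and two columns and, with a sign $(-1)^n\cdot(-1)^n=1$, leaves the $(n-2)\times(n-2)$ matrix whose rows are $w_3,\dots,w_n$ restricted to columns $1,\dots,n-2$; since $w_{k+2}$ has its leftmost nonzero entry in column $k$, this matrix is upper triangular with unit diagonal, so $\delta=1$ for every $n\ge 5$. Hence $\beta=M_n-C_n-1$.

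Assembling the pieces gives
\begin{equation*}
B_n = C_n + (M_n-C_n-1)\,a + (A_n-C_n)\,b + a^2 ,
\end{equation*}
and the theorem follows by substituting the explicit residue values of $C_n$, $A_n$, $M_n$ from Theorems~\ref{thm:conj1} and~\ref{thm:conj2} (recall $C_n=1$ for $n\equiv0,1$ and $C_n=0$ for $n\equiv2,3\ \mathrm{mod}\ 4$). For example, for $n\equiv0\ \mathrm{mod}\ 4$ one gets $1+(0-1-1)a+0\cdot b+a^2=(a-1)^2$, and for $n\equiv2\ \mathrm{mod}\ 4$ one gets $0+(3-0-1)a+(-1-0)b+a^2=a^2+2a-b$; the remaining two residue classes are checked the same way.

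The only genuinely new computation is the evaluation $\delta=1$, which is immediate once the two sparse rows are peeled off; everything else is multilinearity plus bookkeeping against the already-proved Theorems~\ref{thm:conj1} and~\ref{thm:conj2}. The point that demands a little care is verifying that the specializations $(a,b)=(0,1)$ and $(a,b)=(1,0)$ really do reproduce the matrices $A_n$ and $M_n$ exactly — same bandwidth and same corner positions — and that the fully unperturbed matrix is precisely the matrix $C_n$ of Lemma~\ref{lem:main}; the index conventions in~\eqref{genmatrix} make each of these identifications straightforward.
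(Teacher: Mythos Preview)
Your proof is correct and takes a genuinely different route from the paper's. The paper expands $B_n$ along the last column, introducing three new auxiliary determinants $G_{n-1}$, $T_{n-1}$, $P_{n-1}$, each of which is reduced to a combination of $C_k$'s via its own lemma (Lemmas~\ref{lem:Gn}--\ref{lem:Pn}); combining these yields a closed expression in the $C_k$'s that is then evaluated modulo~4. Your argument instead exploits multilinearity in the two perturbed rows to write $B_n$ as a polynomial $C_n+\beta a+\gamma b+\delta a^2$ (the $ab$ term vanishing because both rows collapse to $e_n$), and then determines the coefficients by interpolation at $(a,b)=(0,0),(0,1),(1,0)$, which are precisely $C_n$, $A_n$, $M_n$ from Theorems~\ref{thm:conj1} and~\ref{thm:conj2}, together with the single direct evaluation $\delta=1$. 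What this buys you is brevity and conceptual clarity: no new auxiliary lemmas, and the general theorem becomes an almost immediate consequence of the two conjectures already proved. What the paper's approach buys is independence --- its Section~3 derivation does not invoke Theorems~\ref{thm:conj1} or~\ref{thm:conj2} at all, so one could in principle read it as a stand-alone proof from which the two conjectures drop out as specializations, whereas your argument is logically downstream of them. Your approach also works uniformly for all $n\ge 5$, while the paper's combined formula is derived only for $n\ge 7$ with $n=5,6$ handled by a separate check.
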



To prove the above theorem, we shall start with the following lemma.
\begin{lemma}\label{lem:Gn}
	Let
	\begin{equation*}
	G_{n} =
	\begin{vmatrix}
	1 & 1 & 0 & \cdots & \cdots & 0  & a\\
	1 & 1 & 1 & 1 & \ddots & \ddots & 0  \\
	0 & 1 & 1 & 1 & \ddots & \ddots & \vdots \\ 
	\vdots & \ddots & \ddots &  \ddots & \ddots & \ddots & 0\\
	\vdots &  \ddots & \ddots &\ddots &  \ddots &  1 &  1\\
	\vdots & \ddots & \ddots & \ddots & 1 & 1 & 1\\
	0 & \cdots & \cdots & \cdots  & 0 & 1 & 1\\
	\end{vmatrix}_n. 	
	\end{equation*}
	Then $$G_{n} = C_{n-1} - C_{n-2} + a(-1)^{n+1}.$$
\end{lemma}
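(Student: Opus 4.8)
The plan is to expand $G_n$ along its first row $(1,1,0,\dots,0,a)$, which gives
$$G_n = M_{1,1}-M_{1,2}+a(-1)^{n+1}M_{1,n},$$
where $M_{1,j}$ denotes the minor of $G_n$ obtained by deleting the first row and the $j$th column. It then remains to evaluate these three minors, and the key observation is that in rows $2,\dots,n$ the matrix $G_n$ is purely banded: $(G_n)_{i,j}=1$ exactly when $i-1\le j\le i+2$, and every other entry of those rows is $0$ (in particular the entry $a$ occurs only in row~$1$, and for $n\ge5$ one has $(G_n)_{2,n}=0$).

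From this description, $M_{1,n}$ — the block on rows $2,\dots,n$ and columns $1,\dots,n-1$ — becomes, after the reindexing $p=i-1$, upper triangular with all diagonal entries equal to $1$, so $M_{1,n}=1$. Similarly $M_{1,1}$, the block on rows $2,\dots,n$ and columns $2,\dots,n$, carries exactly the banded pattern of $C_{n-1}^{\top}$ (matching interior band, truncated first row, and truncated last row), so $M_{1,1}=\det C_{n-1}^{\top}=C_{n-1}$. For $M_{1,2}$ (rows $2,\dots,n$, columns $1,3,4,\dots,n$), its first column equals $(1,0,\dots,0)^{\top}$ since $(G_n)_{i,1}=1$ only for $i\le2$; expanding along that column collapses $M_{1,2}$ to the block of $G_n$ on rows $3,\dots,n$ and columns $3,\dots,n$, which is $C_{n-2}^{\top}$, so $M_{1,2}=C_{n-2}$. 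Substituting the three values into the expansion yields $G_n=C_{n-1}-C_{n-2}+a(-1)^{n+1}$. (If one prefers not to invoke transposes, each block can instead be reduced to $C$'s by iterated cofactor expansions in the style of the proof of Lemma~\ref{lem:main}, a determinant being unchanged under transposition anyway.)

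The step that needs the most care is this last round of bookkeeping: one must verify that the two blocks are $C_{n-1}^{\top}$ and $C_{n-2}^{\top}$ exactly — that the truncated first and last rows line up, and that the isolated entry $a$ never leaks into $M_{1,1}$ or $M_{1,2}$. This is precisely where the hypothesis $n\ge5$ is used, since for smaller $n$ the second row of $G_n$ is longer and the corner entry would interfere with these minors.
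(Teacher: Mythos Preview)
Your proof is correct and follows essentially the same approach as the paper: both expand $G_n$ along the first row, identify the $(1,1)$-minor with $C_{n-1}$ (via transposition), reduce the $(1,2)$-minor to $C_{n-2}$ by a further expansion along its first column, and observe that the $(1,n)$-minor is upper triangular with determinant $1$. Your final paragraph about needing $n\ge 5$ is overly cautious---since the entry $a$ sits only in row~$1$, which is deleted in every $M_{1,j}$, it can never leak into these minors regardless of $n$---but this does not affect the argument.
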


\begin{proof}
	Expansion across the first row leads to
	\begin{equation*}
	\begin{aligned}
	G_{n} &=
	\begin{vmatrix}
	1 & 1 & 1 & 0 & \cdots & \cdots  & 0\\
	1 & 1 & 1 & 1 & \ddots & \ddots & \vdots \\
	0 & 1 & 1 & 1 & \ddots & \ddots & \vdots \\ 
	\vdots & \ddots & \ddots &  \ddots & \ddots & \ddots & 0\\
	\vdots &  \ddots  & \ddots &\ddots &  \ddots &  1 &  1\\
	\vdots & \ddots & \ddots & \ddots & 1 & 1 & 1\\
	0 & \cdots & \cdots & \cdots  & 0 & 1 & 1\\
	\end{vmatrix}_{n-1} -
	\begin{vmatrix}
	1 & 1 & 1 & 0 & \cdots & \cdots  & 0\\
	0 & 1 & 1 & 1 & \ddots & \ddots & \vdots \\
	0 & 1 & 1 & 1 & \ddots & \ddots & \vdots \\ 
	\vdots & \ddots & \ddots &  \ddots & \ddots & \ddots & 0\\
	\vdots &  \ddots & \ddots &\ddots &  \ddots &  1 &  1\\
	\vdots & \ddots & \ddots & \ddots & 1 & 1 & 1\\
	0 & \cdots & \cdots & \cdots  & 0 & 1 & 1\\
	\end{vmatrix}_{n-1} \\
	&+a(-1)^{n+1}
	\begin{vmatrix}
	1 & 1 & 1 & 1 &  0 & \cdots  & 0\\
	0 & 1 & 1 & 1 & \ddots & \ddots & \vdots \\
	\vdots  & \ddots & 1 & 1 & \ddots & \ddots & 0 \\ 
	\vdots & \ddots & \ddots &  \ddots & \ddots & \ddots & 1\\
	\vdots &  \ddots  & \ddots &\ddots &  \ddots &  1 &  1\\
	\vdots &\ddots & \ddots & \ddots & \ddots & 1 & 1\\
	0 & \cdots & \cdots & \cdots  & \cdots & 0 & 1\\
	\end{vmatrix}_{n-1} \\
	&= C_{n-1} - C_{n-2} + (-1)^{n+1} a,
	\end{aligned}
	\end{equation*}
	after further expansion of the middle term across the first column and using the fact that $|\cdot^T| = |\cdot|$.
\end{proof}

\begin{lemma}\label{lem:Tn}
	Let 
	\begin{equation*}
	T_{n} =
	\begin{vmatrix}
	1 & 1 & 0 & \cdots &  \cdots & 0  & a\\
	1 & 1 & 1 & \ddots & \ddots & \ddots &  0\\
	1  & 1 & 1 & 1 & \ddots & \ddots & \vdots \\ 
	0 & \ddots & \ddots &  \ddots & \ddots & \ddots & \vdots \\
	\vdots &  \ddots  & \ddots &\ddots &  \ddots &  1 &  0\\
	\vdots & \ddots & \ddots & 1 & 1 & 1 & 1\\
	0 & \cdots & \cdots & 0  & 0 & 1 & 1\\
	\end{vmatrix}_n
	\end{equation*}
	Then $$T_{n} = C_{n-2}(1 + a(-1)^{n+1}) - 2C_{n-3} +2C_{n-4} - C_{n-5},$$
	 for $n \ge 6$.
\end{lemma}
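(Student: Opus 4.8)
The plan is to follow exactly the strategy used for Lemmas~\ref{lem:kn}, \ref{lem:ln} and \ref{lem:Gn}: expand $T_n$ along a sparse line and reduce every surviving minor to determinants of the form $C_k$. Concretely, I would expand along the last column. Since the $1$-band in rows $2,\dots,n-2$ has rightmost entry in column $\le n-1$, that column is $(a,0,\dots,0,1,1)^{T}$, with the nonzero entries $a$ in position $(1,n)$ and $1$ in positions $(n-1,n)$ and $(n,n)$. The cofactor expansion then reads
\[
T_{n} = (-1)^{n+1} a\,\widehat M_{1} \;-\; \widehat M_{2} \;+\; \widehat M_{3},
\]
where $\widehat M_{3}$, $\widehat M_{2}$, $\widehat M_{1}$ are the minors obtained by deleting column $n$ together with row $n$, row $n-1$, row $1$ respectively.

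The two ``$1$''-minors are quick. Deleting row $n$ and column $n$ also removes the entry $a$ and leaves the matrix defining $C_{n-1}$ (in the resulting $(n-1)\times(n-1)$ matrix the bottom row is the $1$-band in columns $n-3,n-2,n-1$, i.e.\ the last row of $C_{n-1}$); hence $\widehat M_{3}=C_{n-1}$. For $\widehat M_{2}$, the image of the old last row is $(0,\dots,0,1)$, so one more expansion along that row peels off a row and a column and leaves the matrix of $C_{n-2}$; hence $\widehat M_{2}=C_{n-2}$.

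The main obstacle is $\widehat M_{1}$, the coefficient of $a$. Deleting row $1$ and column $n$ leaves rows $2,\dots,n$ of $T_n$ truncated to the first $n-1$ columns; the bottom row is again $(0,\dots,0,1)$, and expanding along it reduces $\widehat M_{1}$ to the determinant of an $(n-2)\times(n-2)$ matrix of ``shifted-band'' type: first row $(1,1,1,0,\dots)$, the usual unit band moving rightwards, and a three-entry row followed by a two-entry row at the bottom. I expect showing that this determinant equals $C_{n-2}$ to be the heart of the argument; I would obtain it by one or two further expansions along the first column, which reproduce matrices of the same shape one and two sizes smaller, yielding the three-term recursion $D_{m}=D_{m-1}-D_{m-2}+D_{m-3}$ — precisely the recursion satisfied by $C_{m}$ in the proof of Lemma~\ref{lem:main} — and then matching the base cases $D_{2},D_{3},D_{4}$ with $C_{2},C_{3},C_{4}$. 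This gives $\widehat M_{1}=C_{n-2}$.

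Putting the pieces together yields $T_{n} = (-1)^{n+1} a\,C_{n-2} + C_{n-1} - C_{n-2}$. It then remains to rewrite $C_{n-1}-C_{n-2}$ in the stated form; applying the recursion $C_{m}=C_{m-1}-C_{m-2}+C_{m-3}$ twice (valid because $n\ge 6$) gives $C_{n-1}-C_{n-2}=-C_{n-3}+C_{n-4}=C_{n-2}-2C_{n-3}+2C_{n-4}-C_{n-5}$, so that
\[
T_{n} = C_{n-2}\bigl(1+a(-1)^{n+1}\bigr) - 2C_{n-3}+2C_{n-4}-C_{n-5},
\]
as claimed. Apart from the identification $\widehat M_{1}=C_{n-2}$, the only delicate points are the routine bookkeeping of cofactor signs and of which truncated matrix equals which $C_{k}$ — and it is exactly here that the hypothesis $n\ge 6$ is needed, to be sure that all intermediate matrices are genuinely in the claimed banded form rather than degenerating.
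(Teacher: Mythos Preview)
Your argument is correct, but it proceeds differently from the paper. The paper expands $T_n$ across the \emph{first row}, obtaining three minors $M_{n-1}$, $N_{n-1}$, $V_{n-1}$; the middle one $N_{n-1}$ requires a nested first-column expansion before everything reduces to $C_k$'s, and the combination lands directly on the stated expression $C_{n-2}(1+a(-1)^{n+1})-2C_{n-3}+2C_{n-4}-C_{n-5}$. Your last-column expansion is cleaner: it yields immediately the compact identity $T_n=C_{n-1}-C_{n-2}+a(-1)^{n+1}C_{n-2}$, and you then invoke the three-term recursion for $C_m$ twice to rewrite $C_{n-1}-C_{n-2}$ in the longer form the lemma records. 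One simplification you are missing: your minor $\widehat M_1$ (after peeling off the last row) is precisely $C_{n-2}^{T}$, so $\widehat M_1=C_{n-2}$ follows in one line from $|\cdot^{T}|=|\cdot|$---the same trick the paper uses for its analogous minor $V_{n-1}$---and the separate recursion-plus-base-case argument you sketch for $D_m$ is unnecessary.
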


\begin{proof}
	Expansion of $T_{n}$ across the first row yields
	\begin{equation*}
	\begin{aligned}
	T_{n} &=
	\begin{vmatrix}
	1 & 1 & 0 & \cdots &  \cdots & 0  & 0\\
	1 & 1 & 1 & \ddots & \ddots & \ddots &  0\\
	1  & 1 & 1 & 1 & \ddots & \ddots & \vdots \\ 
	0 & \ddots & \ddots &  \ddots & \ddots & \ddots & \vdots \\
	\vdots &  \ddots  & \ddots &\ddots &  \ddots &  1 &  0\\
	\vdots & \cdots & \ddots & 1 & 1 & 1 & 1\\
	0 & \cdots & \cdots & 0  & 0 & 1 & 1\\
	\end{vmatrix}_{n-1}-
	\begin{vmatrix}
	1 & 1 & 0 & \cdots &  \cdots & 0  & 0\\
	1 & 1 & 1 & \ddots & \ddots & \ddots &  0\\
	0 & 1 & 1 & 1 & \ddots & \ddots & \vdots \\ 
	0 & 1 & 1 &  \ddots & \ddots & \ddots & \vdots \\
	\vdots &  \ddots  & \ddots &\ddots &  \ddots &  1 &  0\\
	\vdots & \cdots & \ddots & 1 & 1 & 1 & 1\\
	0 & \cdots & \cdots & 0  & 0 & 1 & 1\\
	\end{vmatrix}_{n-1} \\
	&+ a(-1)^{n+1}
	\begin{vmatrix}
	1 & 1 & 1 & 0  &  \cdots & 0  & 0\\
	1 & 1 & 1 &  1& \ddots & \ddots &  0\\
	0 & 1 & 1 & 1 & \ddots & \ddots & \vdots \\ 
	\vdots & \ddots & 1 &  \ddots & \ddots & \ddots & 0 \\
	\vdots &  \ddots  & \ddots &\ddots &  \ddots &  1 &  1\\
	\vdots & \ddots & \ddots &\ddots  & 1 & 1 & 1\\
	0 & \cdots & \cdots & \cdots  & 0 & 0 & 1\\
	\end{vmatrix}_{n-1}  \\
	&=: M_{n-1} - N_{n-1} + a(-1)^{n+1}V_{n-1}.
	\end{aligned}
	\end{equation*} 
	$M_{n-1}$ can be expressed in terms of $C_n$ after the expansion along the last column as follows:
	\begin{equation*}
	\begin{aligned}
	M_{n-1} &= \begin{vmatrix}
	1 & 1 & 0 & \cdots  &  \cdots & 0  & 0\\
	1 & 1 & 1 &  \ddots & \ddots & \ddots &  0\\
	1 & 1 & 1 & 1 & \ddots & \ddots & \vdots \\ 
	0 & \ddots & 1 &  \ddots & \ddots & \ddots & \vdots \\
	\vdots &  \ddots  & \ddots &\ddots &  \ddots &  1 & 0 \\
	\vdots & \ddots & \ddots & 1  & 1 & 1 & 1\\
	0 & \cdots & \cdots & 0  & 1 & 1 & 1\\
	\end{vmatrix}_{n-2} - 
	\begin{vmatrix}
	1 & 1 & 0 & \cdots  &  \cdots & 0  & 0\\
	1 & 1 & 1 &  \ddots & \ddots & \ddots &  0\\
	1 & 1 & 1 & 1 & \ddots & \ddots & \vdots \\ 
	0 & \ddots & 1 &  \ddots & \ddots & \ddots & \vdots \\
	\vdots &  \ddots  & \ddots &\ddots &  \ddots &  1 & 0 \\
	\vdots & \ddots & \ddots & 1  & 1 & 1 & 1\\
	0 & \cdots & \cdots & 0  & 0 & 0 & 1\\
	\end{vmatrix}_{n-2} \\
	&= C_{n-2} - C_{n-3}.
	\end{aligned}
	\end{equation*}
	For $N_{n-1}$, applying expansion along the first column, 
	\begin{equation*}
	\begin{aligned}
	N_{n-1} &= \begin{vmatrix}
	1 & 1 & 0 & \cdots  &  \cdots & 0  & 0\\
	1 & 1 & 1 &  \ddots & \ddots & \ddots &  0\\
	1 & 1 & 1 & 1 & \ddots & \ddots & \vdots \\ 
	0 & \ddots & 1 &  \ddots & \ddots & \ddots & \vdots \\
	\vdots &  \ddots  & \ddots &\ddots &  \ddots &  1 & 0 \\
	\vdots & \ddots & \ddots & 1  & 1 & 1 & 1\\
	0 & \cdots & \cdots & 0  & 0 & 1 & 1\\
	\end{vmatrix}_{n-2} -
	\begin{vmatrix}
	1 & 0 & 0 & \cdots  &  \cdots & 0  & 0\\
	1 & 1 & 1 &  \ddots & \ddots & \ddots &  0\\
	1 & 1 & 1 & 1 & \ddots & \ddots & \vdots \\ 
	0 & \ddots & 1 &  \ddots & \ddots & \ddots & \vdots \\
	\vdots &  \ddots  & \ddots &\ddots &  \ddots &  1 & 0 \\
	\vdots & \ddots & \ddots & 1  & 1 & 1 & 1\\
	0 & \cdots & \cdots & 0  & 0 & 1 & 1\\
	\end{vmatrix}_{n-2} \\
	&= : M_{n-2} - N_{n-2, 2}.
	\end{aligned}\\
	\end{equation*}
	Notice that, after expansion across the first row, $N_{n-2,2} = M_{n-3} $.  Finally, applying the last row expansion to $V_{n-1}$ and using the fact that $|\cdot^T| = |\cdot|$, we have $V_{n-1} = C_{n-2}$.
	Combining all results, 
	\begin{equation*}
	\begin{aligned}
	T_{n} &= (C_{n-2}-C_{n-3}) - (C_{n-3} - C_{n-4}) + (C_{n-4} - C_{n-5}) + a(-1)^{n+1}C_{n-2}\\
	&=C_{n-2}(1 + a(-1)^{n+1}) - 2C_{n-3} +2C_{n-4} - C_{n-5}.
	\end{aligned}
	\end{equation*}
\end{proof}

We now state our last lemma.

\begin{lemma}\label{lem:Pn}
	Let 
	\begin{equation*}
	P_{n} =
	\begin{vmatrix}
	1 & 1 & 0 & 0  &  \cdots & 0  & a\\
	1 & 1 & 1 &  \ddots & \ddots & \ddots &  0\\
	1 & 1 & 1 & 1 & \ddots & \ddots & \vdots \\ 
	0 & \ddots & \ddots &  \ddots & \ddots & \ddots & \vdots \\
	\vdots &  \ddots  & \ddots &\ddots &  \ddots &  \ddots & 0 \\
	\vdots & \ddots & \ddots & 1  & 1 & 1 & 1\\
	0 & \cdots & \cdots & 0  & 1 & 1 & 1\\
	\end{vmatrix}_n.
	\end{equation*}
	Then $$P_{n} = C_{n-1}(1+a(-1)^{n+1}) - C_{n-2} + C_{n-3}.$$
\end{lemma}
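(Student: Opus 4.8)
The plan is to use that the matrix defining $P_n$ agrees with the matrix defining $C_n$ in every entry except the top-right corner, which is $a$ in $P_n$ and $0$ in $C_n$. By multilinearity of the determinant in the first row --- equivalently, by a cofactor expansion along the first row, where $C_n$ contributes only through its $(1,1)$ and $(1,2)$ cofactors while the entry $a$ sits alone in position $(1,n)$ --- I would write
$$P_n = C_n + (-1)^{n+1}\,a\,M,$$
where $M$ is the minor obtained by deleting row $1$ and column $n$. Because the top-right entry, the only place where $P_n$ and $C_n$ differ, is removed by this deletion, $M$ is simultaneously the corresponding minor of $C_n$.

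The second step is to recognise $M$ as $C_{n-1}$. Deleting the first row and last column of $C_n$ leaves an $(n-1)\times(n-1)$ band matrix with one subdiagonal, the main diagonal, and two superdiagonals of $1$'s; its transpose has exactly the band pattern of the matrix defining $C_{n-1}$, borders included, so $M^{T}$ equals that matrix and hence $\det M = \det M^{T} = C_{n-1}$ --- the same transpose device already used for the auxiliary determinants in Lemmas~\ref{lem:Gn} and~\ref{lem:Tn}. This yields $P_n = C_n + (-1)^{n+1}a\,C_{n-1}$.

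For the last step I would substitute the three-term identity $C_n = C_{n-1} - C_{n-2} + C_{n-3}$ established inside the proof of Lemma~\ref{lem:main}, giving
$$P_n = C_{n-1} - C_{n-2} + C_{n-3} + (-1)^{n+1}a\,C_{n-1} = C_{n-1}\bigl(1 + a(-1)^{n+1}\bigr) - C_{n-2} + C_{n-3},$$
which is the claimed formula. The only point demanding real care --- and the part I expect to be the fussiest to write out --- is checking in step two that the deletion produces $C_{n-1}^{T}$ with the correct boundary rows and columns rather than an off-by-one variant; once the band indices near the two removed borders are verified (just as the first two and last two rows are checked in the expansions of Section~2), everything else is a single line of algebra.
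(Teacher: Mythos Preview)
Your argument is correct and uses the same two ingredients as the paper: a cofactor expansion along the first row and the observation that the $(1,n)$ minor equals $C_{n-1}$ via transposition. The only organisational difference is that the paper expands the three nonzero entries of the first row separately, obtaining $P_n = C_{n-1} - H_{n-1} + a(-1)^{n+1}C_{n-1}$ and then recomputing $H_{n-1}=C_{n-2}-C_{n-3}$ by a further expansion, whereas you group the $(1,1)$ and $(1,2)$ contributions as $C_n$ and then quote the recurrence $C_n=C_{n-1}-C_{n-2}+C_{n-3}$ from Lemma~\ref{lem:main}; this saves one redundant determinant computation but is otherwise the same proof.
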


\begin{proof}
	Expansion of $P_n$ across the first row leads to
	\begin{equation*}
	\begin{aligned}
	P_{n} &= \begin{vmatrix}
	1 & 1 & 0 & \cdots  &  \cdots & 0  & 0\\
	1 & 1 & 1 &  \ddots & \ddots & \ddots &  0\\
	1 & 1 & 1 & 1 & \ddots & \ddots & \vdots \\ 
	0 & \ddots & \ddots &  \ddots & \ddots & \ddots & \vdots \\
	\vdots &  \ddots  & \ddots &\ddots &  \ddots &  \ddots & 0 \\
	\vdots & \ddots & \ddots & 1  & 1 & 1 & 1\\
	0 & \cdots & \cdots & 0  & 1 & 1 & 1\\
	\end{vmatrix}_{n-1} -
	\begin{vmatrix}
	1 & 1 & 0 & \cdots  &  \cdots & 0  & 0\\
	1 & 1 & 1 &  \ddots & \ddots & \ddots &  0\\
	0 & 1 & 1 & 1 & \ddots & \ddots & \vdots \\ 
	0 & 1 & \ddots &  \ddots & \ddots & \ddots & \vdots \\
	\vdots &  \ddots  & \ddots &\ddots &  \ddots &  \ddots & 0 \\
	\vdots & \ddots & \ddots & 1  & 1 & 1 & 1\\
	0 & \cdots & \cdots & 0  & 1 & 1 & 1\\
	\end{vmatrix}_{n-1}\\
	&+a(-1)^{n+1}\begin{vmatrix}
	1 & 1 & 1 & 0   &  \cdots & 0  & 0\\
	1 & 1 & 1 &  1 & \ddots & \ddots &  0\\
	0 & 1 & 1 & 1 & \ddots & \ddots & \vdots \\ 
	\vdots  & \ddots & \ddots &  \ddots & \ddots & \ddots & 0 \\
	\vdots &  \ddots  & \ddots &\ddots &  \ddots &  \ddots & 1 \\
	\vdots & \ddots & \ddots & \ddots  & 1 & 1 & 1\\
	0 & \cdots & \cdots & \cdots  & 0 & 1 & 1\\
	\end{vmatrix}_{n-1} \\
	&=: C_{n-1} - H_{n-1} + a(-1)^{n+1}C_{n-1},
	\end{aligned}
	\end{equation*}
	where we have used $|\cdot^T| = |\cdot|$.
	
	Applying the first column expansion for $H_{n-1}$,
	\begin{equation*}
	\begin{aligned}
	H_{n-1}&= \begin{vmatrix}
	1 & 1 & 0 & \cdots   &  \cdots & 0  & 0\\
	1 & 1 & 1 &  \ddots & \ddots & \ddots &  0\\
	1 & 1 & 1 & 1 & \ddots & \ddots & \vdots \\ 
	0  & \ddots & \ddots &  \ddots & \ddots & \ddots & \vdots \\
	\vdots &  \ddots  & \ddots &\ddots &  \ddots &  \ddots & 0 \\
	\vdots & \ddots & \ddots & 1  & 1 & 1 & 1\\
	0 & \cdots & \cdots & 0   &  1 & 1 & 1\\
	\end{vmatrix}_{n-2} -
	\begin{vmatrix}
	1 & 0 & 0 & \cdots   &  \cdots & 0  & 0\\
	1 & 1 & 1 &  \ddots & \ddots & \ddots &  0\\
	1 & 1 & 1 & 1 & \ddots & \ddots & \vdots \\ 
	0  & \ddots & \ddots &  \ddots & \ddots & \ddots & \vdots \\
	\vdots &  \ddots  & \ddots &\ddots &  \ddots &  \ddots & 0 \\
	\vdots & \ddots & \ddots & 1  & 1 & 1 & 1\\
	0 & \cdots & \cdots & 0   &  1 & 1 & 1\\
	\end{vmatrix}_{n-2} \\
	&= C_{n-2} - C_{n-3}.
	\end{aligned}
	\end{equation*}
	Combining the results, we have
	$$P_{n} = C_{n-1}(1+a(-1)^{n+1}) - C_{n-2} + C_{n-3}.$$
\end{proof}

We now are ready to prove Theorem~\ref{thm:general}.

Expanding $B_{n}$ along the last column, we have
	\begin{align*}
	B_{n} &= b(-1)^{n+1}
	\begin{vmatrix}
	1 & 1 & 1 & 0  &  \cdots & 0  & 0\\
	1 & 1 & 1 &  1 & \ddots & \ddots &  0\\
	0 & 1 & 1 & 1 & \ddots & \ddots & \vdots \\ 
	\vdots & \ddots & 1 &  \ddots & \ddots & \ddots & 0 \\
	\vdots &  \ddots  & \ddots &\ddots &  \ddots &  1 & 1 \\
	\vdots & \ddots & \ddots & \ddots  & 1 & 1 & 1\\
	0 & \cdots & \cdots & 0  & 0 & 1 & 1\\
	\end{vmatrix}_{n-1} + a(-1)^{n+2}
	\begin{vmatrix}
	1 & 1 & 0 & 0  &  \cdots & 0  & a\\
	1 & 1 & 1 &  1 & \ddots & \ddots &  0\\
	0 & 1 & 1 & 1 & \ddots & \ddots & \vdots \\ 
	\vdots & \ddots & 1 &  \ddots & \ddots & \ddots & 0 \\
	\vdots &  \ddots  & \ddots &\ddots &  \ddots &  1 & 1 \\
	\vdots & \ddots & \ddots & \ddots  & 1 & 1 & 1\\
	0 & \cdots & \cdots & 0  & 0 & 1 & 1\\
	\end{vmatrix}_{n-1}\\
	&-\begin{vmatrix}
	1 & 1 & 0 & 0  &  \cdots & 0  & a\\
	1 & 1 & 1 &  \ddots & \ddots & \ddots &  0\\
	1 & 1 & 1 & 1 & \ddots & \ddots & \vdots \\ 
	0 & \ddots & \ddots &  \ddots & \ddots & \ddots & 0 \\
	\vdots &  \ddots  & \ddots &\ddots &  \ddots &  1 & 0 \\
	\vdots & \ddots & \ddots & 1  & 1 & 1 & 1\\
	0 & \cdots & \cdots & 0  & 0 & 1 & 1\\
	\end{vmatrix}_{n-1} + 
	\begin{vmatrix}
	1 & 1 & 0 & 0  &  \cdots & 0  & a\\
	1 & 1 & 1 &  \ddots & \ddots & \ddots &  0\\
	1 & 1 & 1 & 1 & \ddots & \ddots & \vdots \\ 
	0 & \ddots & \ddots &  \ddots & \ddots & \ddots & 0 \\
	\vdots &  \ddots  & \ddots &\ddots &  \ddots &  1 & 0 \\
	\vdots & \ddots & \ddots & 1  & 1 & 1 & 1\\
	0 & \cdots & \cdots & 0  & 1 & 1 & 1\\
	\end{vmatrix}_{n-1} \\[1em]
	=&b(-1)^{n+1}C_{n-1}+ a(-1)^{n+2}G_{n-1} - T_{n-1} + P_{n-1}.
	\end{align*}
	By using Lemmas~\ref{lem:main},~\ref{lem:Gn},~\ref{lem:Tn} and~\ref{lem:Pn}, we get for $n \ge 7$
	$$
	B_{n} = 3C_{n} + C_{n-1}(b(-1)^{n+1}-2)+2(C_{n-2}-C_{n-3})(1 + a(-1)^{n}) + a^{2}.
	$$
	Using Lemma~\ref{lem:main} and $C_i$, given in the end of Section 2.1 lead to the result in Theorem~\ref{thm:general}, for $n \ge 7$. For $n=5,6$, the outcome of the theorem can be computed using the expansion across the first row of $B_n$.

\section{Numerical Examples}
\index{Numerical examples%
	@\emph{Numerical examples}}%

Finally, we conducted numerical experiment to verify the theorem results. The code was written in Matlab and the determinant was found by build-in function. We consider all natural numbers from $5$ to $10 000$, using the setting of $a$ and $b$ as in Theorems \ref{thm:conj1} and \ref{thm:conj2}. As can be seen from Fig. \ref{fig:detest}, there is excellent match between the numerical values of determinants ($A_n$ and $M_n$)  and outcomes of theorems.

\begin{figure}[H]
	\begin{subfigure}{.5\textwidth}
		\centering
		\includegraphics[width=1.0\linewidth]{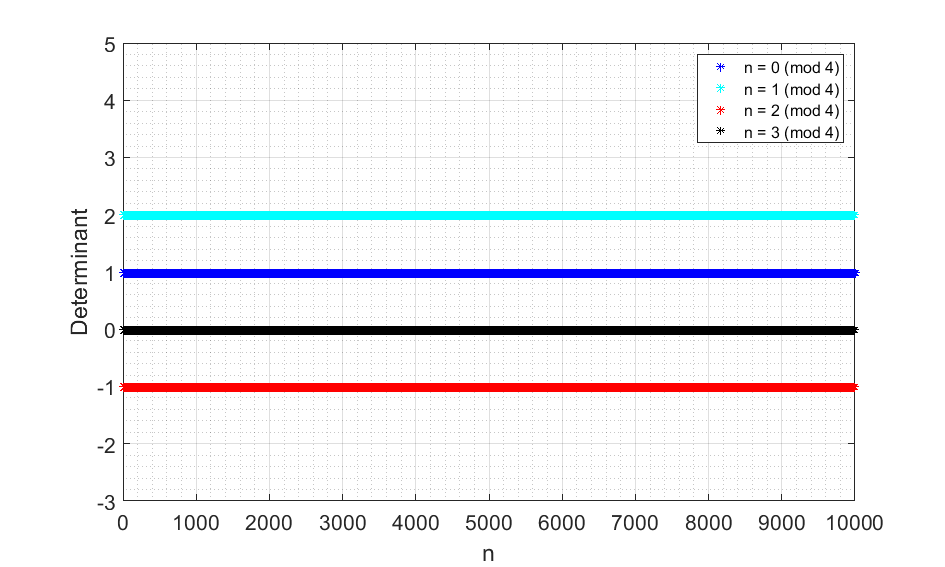}
		\caption{Determinant of $A_n$.}
	\end{subfigure}
	\begin{subfigure}{.5\textwidth}
		\centering
		\includegraphics[width=1.0\linewidth]{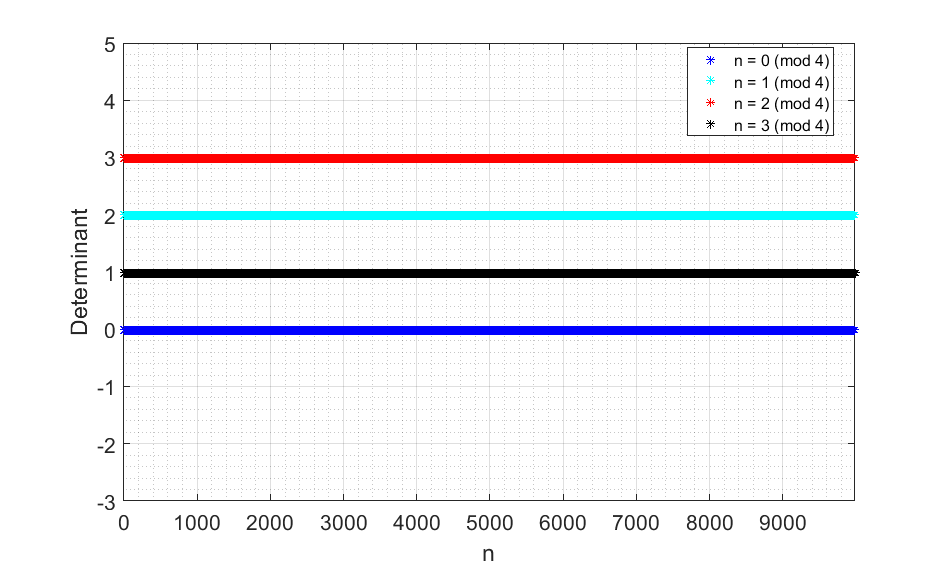}
		\caption{Determinant of $M_n$.}
	\end{subfigure}
	\caption{Numerical values of determinant.}
	\label{fig:detest}
\end{figure}

\section{Conclusion}

We have proved conjectures, which is related to the explicit formula of determinants of Toeplitz matrices, proposed in \cite{andjelic2020some}. The proof was also demonstrated for a general theorem. We have also presented numerical examples for the theorem results. Future work will involve determinant of Toeplitz matrices with arbitrary values where four subdiagonals are not consecutive.

\section*{Acknowledgements} 
The Nazarbayev University Faculty Development Competitive Research Grant, No 110119FD4502, is acknowledged. 
 
\bibliographystyle{unsrt}  
\bibliography{arXiv_proof_conjectures_det_toeplitz}

\begin{thebibliography}{1}

\bibitem{andjelic2020some}
Milica An{\dj}eli{\'c} and Carlos~M da~Fonseca.
\newblock Some determinantal considerations for pentadiagonal matrices.
\newblock {\em Linear and Multilinear Algebra}, pages 1--9, 2020.

\end{thebibliography}

\end{document}